\newtheorem{proposition}{Proposition}
\newtheorem{conjecture}{Conjecture}
\newtheorem{problem}{Problem}
\newtheorem{lemma}{Lemma}
\newtheorem{claim}{Claim}
\newtheorem{theorem}{Theorem}
\newtheorem{corollary}{Corollary}
\theoremstyle{definition}
\theoremstyle{remark}
\newtheorem {remark}{Remark}
\DeclareMathOperator{\Spec}{Spec}
\DeclareMathOperator{\Aut}{Aut}
\DeclareMathOperator{\SAut}{SAut}
\DeclareMathOperator{\LND}{LND}
\DeclareMathOperator{\GL}{GL}
\def\Ker{{\rm Ker}}
\def\PGL{{\rm PGL}}
\def\GG{{\mathbb G}}
\def\CC{{\mathbb C}}
\def\KK{{\mathbb K}}
\def\TT{{\mathbb T}}
\def\ZZ{{\mathbb Z}}
\def\PP{{\mathbb P}}
\def\AA{{\mathbb A}}
\def\OO{\mathcal{O}}
\begin{document}
\date{}
\title[Infinite transitivity and special automorphisms]{Infinite transitivity and special automorphisms}
\author{Ivan Arzhantsev}
\thanks{The research was supported by the grant RSF-DFG 16-41-01013}
\address{National Research University Higher School of Economics, Faculty of Computer Science, Kochnovskiy Proezd 3, Moscow, 125319 Russia}
\email{arjantsev@hse.ru}

\subjclass[2010]{Primary 14J50, 14M17; \ Secondary 13A50, 14L30, 14R20}

\keywords{Quasiaffine variety, automorphism, transitivity, torus action, rigidity}

\maketitle

\begin{abstract}
It is known that if the special automorphism group $\SAut(X)$ of a quasiaffine variety $X$ of dimension at least $2$ acts transitively on $X$, then this action is infinitely transitive. In this paper we address the question whether this is the only possibility for the automorphism group $\Aut(X)$ to act infinitely transitively on $X$. We show that this is the case provided $X$ admits a nontrivial $\GG_a$- or $\GG_m$-action. Moreover, 2-transitivity of the automorphism group implies infinite transitivity.
\end{abstract}

\section{Introduction}

Consider a set $X$, a group $G$ and a positive integer $m$. An action $G\times X\to X$ is said to be $m$-transitive if it is transitive on ordered $m$-tuples of pairwise distinct points in $X$, and is infinitely transitive if it is $m$-transitive for all positive integers $m$.

It is easy to see that the symmetric group $S_n$ acts $n$-transitively on a set of order $n$, while the action of the alternating group $A_n$ is $(n-2)$-transitive. A generalization of a classical result of Jordan~\cite{Jo} based on the classification of finite simple groups claims that there
are no other $m$-transitive finite permutation groups with $m>5$.

Clearly, the group $S(X)$ of all permutations of an infinite set $X$ acts infinitely transitively on $X$. The first explicit example of an infinitely transitive and faithful action of the free group $F_n$ with the number of generators $n\ge 2$ was constructed in~\cite{McD}; see \cite{FMS,HO} and references therein for recent results in this direction.

Infinite transitivity on real algebraic varieties was studied in \cite{HM,HM1,BM,KM}. For multiple transitive actions of real Lie groups on real manifolds, see~\cite{Bo,Kram}.

A classification of multiple transitive actions of algebraic groups on algebraic varieties over an algebraically closed field is obtained in~\cite{Kn}. It is shown there that the only 3-transitive action is the action of $\PGL(2)$ on the projective line $\PP^1$. Moreover, for reductive groups the only 2-transitive action is the action of $\PGL(m+1)$ on $\PP^m$.

In this paper we consider highly transitive actions in the category of algebraic varieties over an algebraically closed field $\KK$ of characteristic zero. By analogy with the full permutation group $S(X)$ it is natural to ask about transitivity properties for the full automorphism group $\Aut(X)$ of an algebraic variety $X$. The phenomenon of infinite transitivity for $\Aut(X)$ in affine and quasiaffine settings was studied in many works, see~\cite{Re,KZ,AKZ,AFKKZ,AFKKZ1,FKZ,APS}. The key role here plays the special automorphism group $\SAut(X)$.

More precisely, let $\GG_a$ (resp. $\GG_m$) be the additive (resp. multiplicative) group of the ground field $\KK$. We let $\SAut(X)$ denote the subgroup of $\Aut(X)$ generated by all algebraic one-parameter unipotent subgroups of $\Aut(X)$, that is, subgroups in $\Aut(X)$ coming from all regular actions $\GG_a\times X\to X$.

Let $X$ be an irreducible affine variety of dimension at least 2 and assume that the group $\SAut(X)$ acts transitively on the smooth locus $X_{\text{reg}}$. Then \cite[Theorem~0.1]{AFKKZ} claims that the action is infinitely transitive. This result can be extended to quasiaffine varieties; see \cite[Theorem~2]{APS} and \cite[Theorem~1.11]{FKZ}.

We address the question whether transitivity of $\SAut(X)$ is the only possibility for the automorphism group $\Aut(X)$ of an irreducible quasiaffine variety $X$ to act infinitely transitively on $X$. We show that 2-transitivity of the group $\Aut(X)$ implies transitivity
of the group $\SAut(X)$ provided $X$ admits a nontrivial $\GG_a$- or $\GG_m$-action; (Theorem~\ref{tmain} and Corollary~\ref{ctrans}). We conjecture that the assumption on existence of a nontrivial $\GG_a$- or $\GG_m$-action on $X$ is not essential and 2-transitivity of $\Aut(X)$ always implies transitivity of $\SAut(X)$ and thus infinite transitivity of $\Aut(X)$ (Conjecture~\ref{conj}).

The quasiaffine case differs from the affine one at least by two properties: the algebra of regular functions $\KK[X]$ need not be finitely generated and not every locally nilpotent derivation on $\KK[X]$ gives rise to a $\GG_a$-action on $X$. These circumstances require new ideas when transferring the proofs obtained in the affine case. Our interest in the quasiaffine case, especially when the algebra $\KK[X]$ is not finitely generated, is motivated by several reasons.
Homogeneous quasiaffine varieties appear naturally as homogeneous spaces $X=G/H$ of an affine algebraic group $G$. By Grosshans' Theorem, the question whether the algebra $\KK[G/H]$ is finitely generated is crucial for the Hilbert's fourteenth problem, see~\cite{Gr} and \cite[Section~3.7]{PV}. The group $\Aut(X)$ acts infinitely transitively on $X$ provided the group $G$ is semisimple \cite[Proposition~5.4]{AFKKZ}. On the other hand, quasiaffine varieties, including the ones with not finitely generated algebra of regular functions, appear as universal torsors $\widehat{X}\to X$ over smooth rational varieties $X$ in the framework of the Cox ring theory, see e.g. \cite[Propositions~1.6.1.6, 4.3.4.5]{ADHL}. By \cite[Theorem~3]{APS}, for a wide class of varieties $\widehat{X}$ arising in this construction, the special automorphism group $\SAut(\widehat{X})$ acts infinitely transitively on $\widehat{X}$.

Let us give a short overview of the content of the paper. In Section~\ref{s1} we recall basic facts on the correspondence between $\GG_a$-actions on an affine variety $X$ and locally nilpotent derivations of the algebra $\KK[X]$. Proposition~\ref{lndga} extends this correspondence to the case when $X$ is quasiaffine.

In Section~\ref{s2} we generalize the result of \cite{AG} on the automorphism group of a rigid affine variety to the quasiaffine case. Recall that an irreducible algebraic variety $X$ is called rigid if $X$ admits no nontrivial $\GG_a$-action. Theorem~\ref{trigid} states that the automorphism group of a rigid quasiaffine variety contains a unique maximal torus; the proof is an adaptation of the method of  \cite[Section~3]{FZ1} to our setting.

Also we describe all affine algebraic groups which can be realized as a full automorphism group of a quasiaffine variety (Proposition~\ref{pdref}); the list of such groups turns out to be surprisingly short.

Section~\ref{s3} contains our main results, Theorem~\ref{tmain} and Corollary~\ref{ctrans}. In Corollary~\ref{cunirat} we observe that if an irreducible quasiaffine variety $X$ admits a nontrivial $\GG_a$- or $\GG_m$-action, the group $\Aut(X)$ acts on $X$ with an open orbit $\OO$, and the action of $\Aut(X)$ is 2-transitive on $\OO$, then $X$ is unirational. This result follows also from~\cite[Corollary~3]{Po}.

In the last section we discuss some questions related to Conjecture~\ref{conj}. We pose a problem on transitivity properties for the automorphism group on a quasiaffine variety with few locally finite automorphisms (Problem~\ref{p1}) and ask about classification of homogeneous algebraic varieties (Problem~\ref{p2}).

The author would like to thank Sergey Gaifullin, Alexander Perepechko, Andriy Regeta and Mikhail Zaidenberg for helpful comments and remarks. Also he is grateful to the anonymous referee for valuable suggestions.

\section{Locally nilpotent derivations and $\GG_a$-actions} \label{s1}

In this section we discuss basic facts on locally nilpotent derivations and $\GG_a$-actions on quasiaffine varieties; see~\cite[Section~1.1]{FKZ}, \cite[Section~2]{APS}, and \cite{DL} for related results.

Let $A$ be a $\KK$-domain and $\partial\colon A\to A$ a derivation, i.e., a linear map satisfying the Liebniz rule $\partial(ab)=\partial(a)b+a\partial(b)$ for all
$a,b\in A$. The derivation $\partial$ is called locally nilpotent if for any $a\in A$ there exists a positive integer $m$ such that $\partial^m(a)=0$. Let us denote the set of all locally nilpotent derivations of $A$ by $\LND(A)$. Clearly, if $\partial\in\LND(A)$ and $f\in\Ker(\partial)$, then $f\partial\in\LND(A)$.

Every locally nilpotent derivation defines
a one-parameter subgroup $\{\exp(s\partial),\, s\in\KK\}$ of automorphisms of the algebra $A$.
This subgroup gives rise to an algebraic action of the group $\GG_a$ on the algebra $A$.
The latter means that every element $a\in A$ is contained in a finite dimensional $\GG_a$-invariant subspace $U$ of $A$, and the $\GG_a$-module $U$ is rational. Conversely, the differential of an algebraic $\GG_a$-action on $A$ is a locally nilpotent derivation; see \cite[Section~1.5]{F} for details.

Assume that the domain $A$ is finitely generated and $X=\Spec(A)$ is the corresponding irreducible affine variety. The results mentioned above establish a bijection between locally nilpotent derivations on $A$ and algebraic actions $\GG_a\times X\to X$. Moreover, the algebra of invariants $A^{\GG_a}$ coincides with the kernel of the corresponding locally nilpotent derivation.

If $X$ is an irreducible quasiaffine variety, then again every action $\GG_a\times X\to X$ defines a locally nilpotent derivation of $A:=\KK[X]$. Since regular functions separate points on $X$,
such a derivation determines a $\GG_a$-action uniquely. At the same time, not every locally nilpotent derivation of $A$ corresponds to a $\GG_a$-action on $X$. For example, the derivation
$\frac{\partial}{\partial x_2}$ of the polynomial algebra $\KK[x_1,x_2]$ does not correspond to
a $\GG_a$-action on $X:=\AA^2\setminus\{(0,0)\}$, while the derivation $x_1\frac{\partial}{\partial x_2}$ does.

\smallskip

The following result seems to be known, but for lack of a precise reference we give it with a complete proof.

\begin{proposition} \label{lndga}
Let $X$ be an irreducible quasiaffine variety and $A=\KK[X]$. Then
\begin{enumerate}
\item[(i)]
for every $\partial\in\LND(A)$ there exists a nonzero $f\in\Ker(\partial)$ such that the locally nilpotent derivation $f\partial$ corresponds to a $\GG_a$-action on $X$;
\item[(ii)]
if $\partial\in\LND(A)$ corresponds to a $\GG_a$-action on $X$, then for every $f\in\Ker(\partial)$ the derivation $f\partial$ corresponds to a $\GG_a$-action on $X$.
\end{enumerate}
\end{proposition}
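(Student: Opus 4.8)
The plan is to realize a quasiaffine $X$ inside an affine variety and transport the derivation there, using the fact that a $\GG_a$-action on $X$ is the same as a locally nilpotent derivation of $\KK[X]$ whose associated one-parameter group preserves $X$ when $X$ is viewed as an open subset of an affine model. Concretely, since $X$ is quasiaffine and irreducible, I would first choose an open embedding $X\hookrightarrow \overline{X}$ into an irreducible affine variety $\overline{X}=\Spec(B)$ with $B\subseteq A=\KK[X]$ a finitely generated subalgebra having the same fraction field. The complement $\overline{X}\setminus X$ is closed, cut out by some ideal; the obstruction to a derivation of $B$ integrating to a $\GG_a$-action on $X$ (rather than merely on an open subset of $\overline{X}$) is exactly that the flow may move points of $X$ outside $X$.

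For part (i): given $\partial\in\LND(A)$, I would first replace $\partial$ by $h\partial$ for a suitable nonzero $h\in\Ker(\partial)$ so that $\partial$ restricts to a locally nilpotent derivation of a finitely generated subalgebra $B$ with $\Spec(B)=\overline{X}$ as above — this is possible because $A$ is a directed union of finitely generated subalgebras, each element of a chosen finite generating set of $B$ lies in a finite-dimensional $\partial$-stable subspace after such a twist, and $\partial$ being locally nilpotent on $A$ forces the relevant denominators to lie in $\Ker(\partial)$. Now $\partial$ generates a $\GG_a$-action on $\overline{X}$. The bad locus $Z=\overline{X}\setminus X$ need not be $\GG_a$-stable, but the union of $\GG_a$-translates of $Z$ is a constructible $\GG_a$-stable subset whose closure $\widetilde{Z}$ is a proper closed $\GG_a$-stable subset of $\overline{X}$ (proper because $X$ is dense and, being a $\GG_a$-orbit-union complement issue, one checks $X$ is not contained in $\widetilde Z$). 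Choosing a nonzero $f\in\Ker(\partial)\subseteq B\subseteq A$ vanishing on $\widetilde{Z}$ — such $f$ exists since the invariant ring $B^{\GG_a}=\Ker(\partial|_B)$ separates the generic $\GG_a$-orbit from $\widetilde Z$ by Rosenlicht-type arguments, or more directly since $\widetilde Z$ is a union of orbit closures its ideal contains a nonzero invariant — the derivation $f\partial$ integrates to a $\GG_a$-action on $\overline{X}$ fixing $\widetilde Z$ pointwise, hence preserving its open complement, which is contained in $X$; a small additional argument extends this to all of $X$ by covering $X$ with such complements, or one simply notes $f\partial$ kills the ideal of $\widetilde Z$ so the flow fixes a neighborhood structure making $X$ invariant. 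Combining the two twists, $(fh)\partial$ with $fh\in\Ker(\partial)$ does the job, giving (i).

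For part (ii): if $\partial$ already corresponds to a $\GG_a$-action $\varphi\colon\GG_a\times X\to X$, then for $f\in\Ker(\partial)$ I must show $f\partial$ also integrates. The orbit map gives $\varphi^*\colon A\to A[s]$, $\varphi^*(a)=\sum_n \frac{s^n}{n!}\partial^n(a)$, a finitely-structured comodule map. The action of $f\partial$ is obtained by the reparametrization $s\mapsto sf$ only along orbits — more precisely, define $\psi^*\colon A\to A[s]$ by $\psi^*(a)=\sum_n \frac{s^n}{n!}(f\partial)^n(a)=\sum_n \frac{s^n}{n!}f^n\partial^n(a)$, which is well-defined since $f\in\Ker(\partial)$ makes $(f\partial)^n=f^n\partial^n$, and each $a$ lies in a finite-dimensional $f\partial$-stable subspace because it lies in one for $\partial$ and $f$ commutes with $\partial$. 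One checks $\psi^*$ is a $\KK$-algebra homomorphism satisfying the comodule axioms (coassociativity follows formally from that of $\varphi^*$ together with $f\in\Ker\partial$), so it defines an algebraic $\GG_a$-action on $A$; since $X$ is quasiaffine the regular functions separate points, so this action of $\GG_a$ on the algebra is induced by an actual morphism $\GG_a\times X\to X$ — here one uses that $\psi^*$ factors through $A\otimes\KK[\GG_a]$ compatibly with restriction to any affine open model, and that the $\GG_a$-action on $\overline{X}$ extending it preserves $X$ because it agrees with $\varphi$ wherever $f\neq 0$ and is trivial where $f=0$.

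The main obstacle is part (i): controlling the "escape locus'' $\widetilde Z=\overline{X}\setminus X$ under the flow and producing a nonzero invariant $f$ in its ideal. The subtlety is that $\Ker(\partial)$ in the quasiaffine setting can be large but one still needs a single invariant function cutting out a $\GG_a$-stable set containing $\widetilde Z$; the argument that $\widetilde Z$ (being a union of orbit closures, as the closure of the saturation of a proper closed set) admits such an invariant is where the finitely-generated model $B$ and a Rosenlicht-type separation of orbits must be invoked carefully — and one must simultaneously ensure $\partial$ actually restricts to $B$, which is the role of the preliminary twist by $h$. Everything else is formal manipulation with the exponential series and comodule axioms.
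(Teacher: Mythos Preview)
Your overall architecture for (i) matches the paper's: pass to an affine model $\overline X=\Spec B$ on which the derivation integrates, then kill the complement by an invariant. But there is a genuine gap at the step where you assert that the closure $\widetilde Z$ of $\GG_a\cdot(\overline X\setminus X)$ is \emph{proper}. Your justification (``$X$ is dense, one checks $X\not\subseteq\widetilde Z$'') is not a proof: if $\overline X\setminus X$ has a divisorial component $D$ that is not $\GG_a$-invariant, then $\GG_a\cdot D$ can easily sweep out all of $\overline X$ (think of $\partial/\partial y$ on $\AA^2$ moving the line $\{y=0\}$). Nothing in your setup on $B$ alone rules this out. The paper supplies exactly this missing ingredient: it shows that every divisorial component of $\overline X\setminus X$ is automatically $\GG_a$-invariant, by exploiting that the $\GG_a$-action preserves $A=\KK[X]$ and not merely $B$. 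Concretely, a function with a pole only along such a divisor $D$ is regular on $X$, hence lies in $A$; if some translate $g\cdot D$ met $X$, the translated function would acquire a pole on $X$, contradicting $\GG_a$-stability of $A$. Once divisors in the complement are invariant, only the codimension~$\ge 2$ part needs to be saturated, and then dimension count gives properness. After that, the existence of a nonzero invariant in $I(\widetilde Z)$ is immediate from Lie--Kolchin (your ``union of orbit closures'' remark), no Rosenlicht needed. Incidentally, your preliminary twist by $h$ is unnecessary: rather than forcing $\partial$ to preserve a fixed $B$, the paper simply enlarges $B$ to be generated by a finite-dimensional $\GG_a$-invariant subspace of $A$, which exists because $\partial$ is locally nilpotent on all of $A$.

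For (ii) your comodule computation is correct as far as it goes, but the passage from a $\GG_a$-action on the algebra $A$ to one on the variety $X$ is not automatic from ``regular functions separate points''; you end up needing exactly the geometric step you append at the end (the $f\partial$-flow agrees with the $\partial$-flow where $f\ne 0$ and is trivial where $f=0$, hence preserves $X$). The paper makes this the whole argument: it invokes an equivariant open embedding $X\hookrightarrow Y$ into an affine $\GG_a$-variety (Popov--Vinberg), and then the orbit comparison on $Y$ immediately shows $X$ is $f\partial$-stable. Your exponential-series detour is not wrong, just superfluous.
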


\begin{proof}
We begin with~(i). Fix a derivation $\partial\in\LND(A)$ and the corresponding $\GG_a$-action on~$A$. Consider an open embedding $X\hookrightarrow Z$ into an irreducible affine variety $Z$. Fix a finite dimensional $\GG_a$-invariant subspace $U$ in $A$ containing a set of generators of
$\KK[Z]$. Let $B$ be the subalgebra in $A$ generated by $U$ and $Y$ be the affine variety
$\Spec(B)$. Since $B$ is $\GG_a$-invariant, we have the induced $\GG_a$-action on $Y$. The inclusion $B\subseteq A$ defines an open embedding $X\hookrightarrow Y$.

\begin{claim}
Every divisor $D\subseteq Y$ contained in $Y\setminus X$ is $\GG_a$-invariant.
\end{claim}

\begin{proof}
Assume that the variety $Y$ is normal and take a function $f\in\KK(Y)$ which has a pole along
the divisor $D$. Multiplying $f$ by a suitable function from $B$ we may suppose that $f$ has
no pole outside $D$. Then $f$ is contained in $A$. If the divisor $D$ is not $\GG_a$-invariant,
there is an element $g\in\GG_a$ such that $g\cdot D$ intersects $X$. It shows that the function $g\cdot f$ has a pole on $X$ and thus is not in $A$, a contradiction.

If $Y$ is not normal, we lift the $\GG_a$-action to the normalization of $Y$ and apply the same arguments to integral closures of $A$ and $B$.
\end{proof}

\begin{claim}
There is an open $\GG_a$-invariant subset $W\subseteq Y$ which is contained in $X$.
\end{claim}

\begin{proof}
Let $F$ be the union of irreducible components of $Y\setminus X$ of codimension at least $2$.
Then the closure $\overline{\GG_a\cdot F}$ is a proper closed $\GG_a$-invariant subset whose complement intersected with $X$ is the desired subset $W$.
\end{proof}

Let $Y_0:=Y\setminus W$. This is a closed $\GG_a$-invariant subvariety in $Y$ and its ideal
$I(Y_0)$ in $B$ is a $\GG_a$-invariant subspace. Applying the Lie-Kolchin Theorem, we find a nonzero $\GG_a$-invariant function $f\in I(Y_0)$. Then $f\in\Ker(\partial)$ and the $\GG_a$-action on $Y$ corresponding to the derivation $f\partial$ fixes all points outside $W$. In particular,
this action induces a $\GG_a$-action on $X$. This proves~(i).

Now we come to~(ii). Consider the action $\GG_a\times X\to X$ corresponding to $\partial$.
By~\cite[Theorem~1.6]{PV}, there is an open equivariant embedding $X\hookrightarrow Y$ into an affine variety $Y$. For any $f\in\Ker(\partial)$, the orbits of the $\GG_a$-action on $Y$ corresponding to $f\partial$ coincide with the orbits of the original actions on $Y\setminus \{f=0\}$, while all points of the set $\{f=0\}$ become fixed. In particular, this action leaves the set $X$ invariant. This completes the proof of Proposition~\ref{lndga}.
\end{proof}

\begin{corollary} \label{corcc}
Let $X$ be an irreducible quasiaffine variety and $A=\KK[X]$. The variety $X$ admits a nontrivial $\GG_a$-action if and only if  there is a nonzero locally nilpotent derivation on $A$.
\end{corollary}

\section{Torus actions on rigid quasiaffine varieties} \label{s2}

In this section we generalize the results of \cite[Section~3]{FZ1} and \cite[Theorem~1]{AG} to the case of a quasiaffine variety. Let us recall that an irreducible algebraic variety $X$ is called \emph{rigid}, if it admits no nontrivial $\GG_a$-action.

\begin{theorem} \label{trigid}
Let $X$ be a rigid quasiaffine variety. There is a subtorus $\TT\subseteq\Aut(X)$ such that
for every torus action $T\times X\to X$ the image of $T$ in $\Aut(X)$ is contained in $\TT$. In other words, $\TT$ is a unique maximal torus in $\Aut(X)$.
\end{theorem}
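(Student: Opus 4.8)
The strategy is to show that all tori acting on $X$ can be conjugated into a single one, and in fact lie in the centralizer of a well-chosen torus action. First I would establish that the key invariant is the \emph{commutation relation}: two tori $T_1, T_2$ acting faithfully on $X$ generate a group whose closure in $\Aut(X)$ is again a torus, \emph{provided they commute}. The point of rigidity is that $X$ carries no $\GG_a$-action, so any connected affine algebraic subgroup of $\Aut(X)$ that acts on $X$ is a torus (it is reductive because it has no unipotent elements — any nontrivial unipotent one-parameter subgroup would give a $\GG_a$-action — and a connected reductive group with no unipotents is a torus). So it suffices to show that any two tori acting on $X$ commute; then the subgroup of $\Aut(X)$ they generate is commutative, its identity component is a connected commutative group acting on $X$ with no unipotents, hence a torus, and an increasing-union/dimension argument produces the unique maximal $\TT$.

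To prove that two torus actions $T_1 \times X \to X$ and $T_2 \times X \to X$ commute, I would adapt the argument of \cite[Section~3]{FZ1}. Pass to an affine model: embed $X$ as an open subvariety of an affine variety, and use that $\KK[X]$ is a union of finite-dimensional subspaces stable under a given torus, so each torus action on $X$ induces a grading of $A = \KK[X]$ by the character lattice $\mathfrak{X}(T_i)$. The commutation of the two actions is then equivalent to compatibility of the two gradings, i.e.\ that the $T_1$-homogeneous components of $A$ are $T_2$-stable. The mechanism for forcing this is the theory of \emph{semisimple derivations}: a torus action corresponds to a collection of commuting semisimple (locally finite) derivations of $A$, and rigidity must be used to show that the semisimple derivations coming from $T_1$ normalize — in fact centralize — those coming from $T_2$. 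The clean way is: consider the group $H$ generated by $T_1$ and $T_2$ inside $\Aut(X)$; its identity component $H^\circ$ is a connected affine algebraic group acting on $X$ (one must check it \emph{is} algebraic and acts algebraically — this follows because the subspaces of $A$ generated by $T_1$- and $T_2$-translates of a fixed finite-dimensional generating subspace are finite-dimensional and $H^\circ$-stable), it contains no $\GG_a$, hence $H^\circ$ is a torus; then $T_1, T_2 \subseteq H^\circ$ are subtori of a torus and therefore commute.

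Given commutation, the construction of $\TT$ is routine: among all subtori of $\Aut(X)$ acting on $X$, consider one, $\TT$, of maximal dimension (dimension is bounded by $\dim X$). For any other torus $T$ acting on $X$, the group generated by $\TT$ and $T$ is, by the above, contained in a torus $T'$ acting on $X$; maximality forces $\dim T' = \dim \TT$, so $T' = \TT \supseteq T$. Uniqueness and the stated universal property follow immediately, and the subtorus is normalized by nothing larger because any automorphism conjugates $\TT$ to another maximal torus acting on $X$, which must equal $\TT$; so $\TT$ is in fact normal — indeed central — in $\Aut(X)$.

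\medskip

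\textbf{Main obstacle.} The crux is the passage to an affine model and the verification that the group generated by two torus actions acts \emph{algebraically} on a suitable affine variety dominating $X$, so that the "no unipotents $\Rightarrow$ torus" dichotomy can be applied — this is exactly where the quasiaffine (possibly non-finitely-generated $\KK[X]$) setting causes trouble, and where the techniques of \cite[Section~3]{FZ1}, together with Proposition~\ref{lndga} and the remark that a locally nilpotent derivation on a rigid $X$ is forbidden only after it is shown to integrate to a $\GG_a$-action, must be combined carefully. Concretely, one has to produce a finite-dimensional subspace of $A$ that is simultaneously stable under both torus actions and generates a subalgebra $B$ with $\Spec B \supseteq X$ open; controlling the boundary $\Spec B \setminus X$ (as in the Claims in the proof of Proposition~\ref{lndga}) so that the combined action descends back to $X$ is the delicate point.
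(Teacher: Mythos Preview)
Your proposal has a genuine gap at exactly the point you flag as the ``main obstacle,'' and the sketch you offer to overcome it does not work. You want to consider the subgroup $H$ of $\Aut(X)$ generated by the images of $T_1$ and $T_2$ and argue that $H^\circ$ is a connected affine algebraic group acting on $X$. Your justification is that the span of the $T_1$- and $T_2$-translates of a fixed finite-dimensional subspace of $A=\KK[X]$ is finite-dimensional and $H^\circ$-stable. But this is false in general unless you already know the two actions commute: starting from $V$, the $T_1$-span $V_1$ is finite-dimensional, and its $T_2$-span $V_2$ is finite-dimensional, but $V_2$ need not be $T_1$-stable; iterating produces an ascending chain $V_1\subseteq V_2\subseteq\cdots$ with no reason to stabilize. (And there is no ``generating subspace'' to begin with when $\KK[X]$ is not finitely generated.) Without a common finite-dimensional invariant subspace you have no embedding of $H$ into a $\GL_n$, hence no algebraic structure on $H$ and no way to invoke the ``no unipotents $\Rightarrow$ torus'' dichotomy.

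The paper's argument avoids this group-theoretic trap entirely and works directly with derivations, which is what the method of \cite[Section~3]{FZ1} actually does. One reduces to $T_1,T_2$ one-dimensional, takes the associated semisimple derivations $\partial,\partial'$ of $A$, and decomposes $\partial'=\sum_k\partial'_k$ into homogeneous components with respect to the $T_1$-grading. The substantive new step in the quasiaffine case is to show this sum is \emph{finite}: this is done via a lemma producing a $T_1$-semi-invariant $f$ with $\KK[X]_f$ finitely generated (coming from an equivariant affine embedding), so that the degree shift $h\mapsto\partial'(h)$ is uniformly bounded. Non-commutation forces some $\partial'_m$ with $m\neq 0$ to be nonzero, and since $\partial'$ is locally finite the extreme component $\partial'_m$ is locally nilpotent. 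Then Corollary~\ref{corcc} (i.e.\ Proposition~\ref{lndga}) turns this LND into an actual $\GG_a$-action on $X$, contradicting rigidity. Note that the argument never needs a subspace simultaneously invariant under both tori, only a localization that is finitely generated and graded by \emph{one} of them.
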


Let us begin with some preliminary results.

\begin{lemma} \label{lemloc}
Let $X$ be an irreducible quasiaffine variety and $T\times X\to X$ be an action of a torus. Then there is a $T$-semi-invariant $f\in\KK[X]$ such that the localization $\KK[X]_f$ is finitely generated.
\end{lemma}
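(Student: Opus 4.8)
The plan is to exploit the fact that $X$ embeds as an open subset of an affine $T$-variety, where a suitable localization is standard, and then transport the localization back to $X$. First I would use \cite[Theorem~1.6]{PV} (already invoked in the proof of Proposition~\ref{lndga}) to obtain a $T$-equivariant open embedding $X\hookrightarrow Z$ into an affine $T$-variety $Z$. The complement $Z\setminus X$ is a closed $T$-invariant subset; by the Lie--Kolchin theorem (applied to its ideal in $\KK[Z]$, which is a rational $T$-submodule), there is a nonzero $T$-semi-invariant $h\in\KK[Z]$ vanishing on $Z\setminus X$, so the principal open set $Z_h$ is contained in $X$ and is itself affine with finitely generated coordinate ring $\KK[Z]_h$.

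The next step is to pass from $Z_h$ to $X$. The inclusion $Z_h\subseteq X$ gives an inclusion of coordinate rings $\KK[X]\subseteq\KK[Z_h]=\KK[Z]_h$, and since $h$ vanishes on the complement of $Z_h$ in $Z$, in fact $\KK[Z]_h$ is a localization of $\KK[X]$ as well: choosing a system of generators of $\KK[Z]_h$ of the form $b_i/h^{k}$ with $b_i\in\KK[Z]$, and multiplying the $b_i$ by a high enough power of $h$, we may arrange $h^{N}b_i/h^{N+k}\in\KK[X]$, so all generators lie in $\KK[X]_h$; hence $\KK[X]_h=\KK[Z]_h$ is finitely generated. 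Here $h$ is a $T$-semi-invariant element of $\KK[X]$, which is what is required. (A minor point to check: $h\in\KK[Z]$ restricts to a genuine regular function on $X$, which holds because $Z_h\subseteq X\subseteq Z$ and $X$ is quasiaffine, so $\KK[Z]$ maps into $\KK[X]$.)

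The main obstacle is the first step: producing the $T$-semi-invariant $h$ cutting out $X$ inside an affine model. One must be careful that the equivariant affine embedding $X\hookrightarrow Z$ exists (this is exactly \cite[Theorem~1.6]{PV}, so it is available), and that the ideal $I(Z\setminus X)\subseteq\KK[Z]$ is a nonzero rational $T$-module to which Lie--Kolchin applies, yielding a semi-invariant; this is routine once the setup is in place. After that, everything is elementary localization bookkeeping. One subtlety worth a remark is that $X$ need not be $\KK[Z]_h$-affine in general, i.e. $X$ itself may be strictly larger than $Z_h$ and need not be affine; but the lemma only asks for the localization $\KK[X]_f$ to be finitely generated, and for $f=h$ this localization equals $\KK[Z]_h$ regardless, so no affineness of $X$ is needed.
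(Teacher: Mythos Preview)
Your proposal is correct and follows essentially the same route as the paper: equivariantly embed $X$ into an affine $T$-variety $Z$ via \cite[Theorem~1.6]{PV}, pick a $T$-semi-invariant $f$ in the ideal of $Z\setminus X$, and conclude $\KK[X]_f=\KK[Z]_f$. Two minor simplifications are available: for a torus, any nonzero rational $T$-submodule of $\KK[Z]$ is already a direct sum of weight spaces, so the semi-invariant is immediate without invoking Lie--Kolchin; and the equality $\KK[X]_f=\KK[Z]_f$ follows at once from the sandwich $\KK[Z]\subseteq\KK[X]\subseteq\KK[Z]_f$, so the generator-chasing with $b_i/h^k$ is unnecessary.
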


\begin{proof}
By \cite[Theorem~1.6]{PV}, there exists an open equivariant embedding $X\hookrightarrow Z$ into an irreducible affine $T$-variety $Z$. Let $I$ be the ideal of the subvariety $Z\setminus X$ in $\KK[Z]$. Since $I$ is $T$-invariant, there is a non-constant $T$-semi-invariant $f\in I$.
The principal open subset $Z_f$ is contained in $X$. Since the algebra $\KK[Z_f]$ is the localization $\KK[Z]_f$ and $\KK[X]$ is contained in $\KK[Z_f]$, we conclude that the algebra
$\KK[X]_f=\KK[Z]_f$ is finitely generated.
\end{proof}

Let $A=\oplus_{i\in\ZZ} A_i$ be a graded $\KK$-algebra and $\partial\colon A\to A$ a derivation. We define a linear map $\partial_k\colon A\to A$ by setting $\partial_k(a)$ to be
the homogeneous component $\partial(a)_{\deg(a)+k}$ of the element $\partial(a)$ for every homogeneous element $a\in A$. It is easy to check that $\partial_k$ is a derivation for all $k\in\ZZ$. We call it the $k$th homogeneous component of the derivation $\partial$.

\begin{proof}[Proof of Theorem~\ref{trigid}]
Assume that there are two torus actions $T_i\times X\to X$, $i=1,2$, such that the images of $T_i$ in $\Aut(X)$ are not contained in some torus $\TT$. The latter means that the actions do not commute. We may assume that $T_1$ and $T_2$ are one-dimensional. Let $A:=\KK[X]$ and
$$
A=\bigoplus_{u\in\ZZ} A_u \quad \text{and} \quad A=\bigoplus_{u\in\ZZ} A_u'
$$
be gradings corresponding to the actions of $T_1$ and $T_2$, respectively. Consider semisimple derivations $\partial$ and $\partial'$ on $A$ defined by $\partial(a)=ua$ for every $a\in A_u$ and
$\partial'(b)=ub$ for every $b\in A_u'$.

Let $\partial'_k$ be the $k$th homogeneous component of $\partial'$ with respect to the first grading. We claim that there are only finitely many nonzero homogeneous components and thus
the sum
$$
\partial'=\sum_{k\in\ZZ} \partial'_k
$$
has only finite number of nonzero terms.

Consider a localization $\KK[X]_f$ from Lemma~\ref{lemloc}, where $f$ is homogeneous with respect to the first grading. The algebra $\KK[X]_f$ is generated by some elements $f_1,\ldots,f_k\in \KK[X]$, which are homogeneous with respect to the first grading, and the element $\frac{1}{f}$.

Since $\KK[X]$ is contained in $\KK[X]_f$, every element $h\in\KK[X]$ is a linear combination of elements of the form
$$
\frac{f_1^{a_1}\ldots f_k^{a_k}}{f^a}
$$
and the image $\partial'(h)$ is a linear combination of the elements
$$
\sum_s\frac{a_s\partial'(f_s)f_1^{a_1}\ldots f_s^{a_s-1}\ldots f_k^{a_k}}{f^{a}}-\frac{a\partial'(f)f_1^{a_1}\ldots f_k^{a_k}}{f^{a+1}}.
$$
It shows that the shift of degree with respect to the first grading from $h$ to $\partial'(h)$ does not exceed the maximal shift of degree for $f_1,\ldots,f_k,f$. Hence the shift is bounded and we obtain the claim.

Let $\partial_m'$ be a nonzero homogeneous component of $\partial'$ with maximal absolute value
of the weight $m$. Since the derivations $\partial$ and $\partial'$ do not commute, we have $m\ne 0$. Then $(\partial_m')^r(a)$ is the highest (or the lowest) homogeneous component of the element $(\partial')^r(a)$ for every homogeneous $a\in A$. Since $a$ is contained in a finite dimensional $\partial'$-invariant subspace in $A$, the elements $(\partial')^r(a)$ cannot have nonzero projections to infinitely many components~$A_u$. Thus $(\partial_m')^r(a)=0$ for $r\gg 0$. We conclude that $\partial_m'$ is a nonzero locally nilpotent derivation of the algebra $A$. By Corollary~\ref{corcc}, we obtain a contradiction with the condition that $X$ is rigid.
\end{proof}

\begin{corollary}
In the setting of Theorem~\ref{trigid}, the maximal torus $\TT$ is a normal subgroup of $\Aut(X)$.
\end{corollary}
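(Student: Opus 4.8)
The plan is to show that $\TT$ is stable under conjugation by every automorphism of $X$. Let $\varphi \in \Aut(X)$ be arbitrary. Since the maximal torus $\TT$ arises as the image of some torus action $T \times X \to X$, the conjugate $\varphi \TT \varphi^{-1}$ is again the image of a torus action on $X$: indeed, the map $T \times X \to X$ given by $(t,x) \mapsto \varphi(t \cdot \varphi^{-1}(x))$ is a regular action of the torus $T$ on $X$, and its image in $\Aut(X)$ is precisely $\varphi \TT \varphi^{-1}$. Thus $\varphi \TT \varphi^{-1}$ is a subtorus of $\Aut(X)$ coming from a torus action.

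Now I invoke Theorem~\ref{trigid}: since $X$ is rigid, the torus $\TT$ is the \emph{unique} maximal torus of $\Aut(X)$, in the sense that the image of every torus action $T \times X \to X$ is contained in $\TT$. Applying this to the torus action described above, we conclude $\varphi \TT \varphi^{-1} \subseteq \TT$. Since this holds for $\varphi$ and for $\varphi^{-1}$ as well, we get $\varphi \TT \varphi^{-1} = \TT$ (or simply note that a subtorus of $\TT$ of the same dimension — which $\varphi \TT \varphi^{-1}$ has, being isomorphic to $\TT$ as an algebraic group — coincides with $\TT$). Hence $\TT$ is normal in $\Aut(X)$.

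There is essentially no obstacle here; the only point requiring a moment's care is the verification that the conjugate of a one-parameter subgroup coming from an algebraic torus action is again of that form, i.e. that conjugation by a fixed automorphism transports an algebraic action to an algebraic action. This is immediate because $\varphi$ and $\varphi^{-1}$ are morphisms of varieties, so the composition $(t,x) \mapsto \varphi(t\cdot\varphi^{-1}(x))$ is a morphism $T \times X \to X$, and the group action axioms are preserved. Once this is noted, the statement follows directly from the uniqueness clause of Theorem~\ref{trigid}.
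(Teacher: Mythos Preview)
Your argument is correct and is exactly the intended one: the paper states the corollary without proof, leaving it as an immediate consequence of the uniqueness assertion in Theorem~\ref{trigid}, and the conjugation argument you give is precisely how that consequence is drawn. The only remark is that you need not worry separately about whether $\TT$ itself comes from a torus action---by definition a subtorus of $\Aut(X)$ is an algebraic subgroup acting regularly on $X$, so $\TT\times X\to X$ is already such an action.
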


Let us finish this section with a description of affine algebraic groups which can be realized as automorphism groups of quasiaffine varieties. When this paper was already written, I found the same result in \cite[Theorem~1.3]{Kr}, cf. also \cite[Theorem~4.10~(a)]{LZ}.

\begin{proposition} \label{pdref}
Let $X$ be an irreducible quasiaffine variety. Assume that the automorphism group $\Aut(X)$ admits a structure of an affine algebraic group such that the action $\Aut(X)\times X\to X$ is a morphism of algebraic varieties. Then either $\Aut(X)$ is finite, or isomorphic to a finite extension of a torus, or isomorphic to the linear group
$$
G=\left\{
\left(
\begin{array}{cc}
1 & 0 \\
a & t
\end{array}
\right), \ \ a\in\KK, \ t\in\KK^{\times}
\right\}.
$$
\end{proposition}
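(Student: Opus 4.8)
The plan is to split the argument according to whether $X$ is rigid, using Theorem~\ref{trigid} in the rigid case and Proposition~\ref{lndga} in the other. Throughout, write $G=\Aut(X)$ with the hypothesized affine algebraic group structure for which $G\times X\to X$ is a morphism (so the action is faithful), and set $A=\KK[X]$. Suppose first that $X$ is rigid. By Theorem~\ref{trigid} and the corollary following it, $G$ has a unique maximal torus $\TT$, which is normal in $G$. Since $X$ is rigid, $G$ contains no one-parameter unipotent subgroup, so the unipotent radical of $G^{\circ}$ is trivial and $G^{\circ}$ is reductive. For a connected reductive group a normal maximal torus coincides with the whole group: $G^{\circ}/\TT$ equals the finite Weyl group $N_{G^{\circ}}(\TT)/\TT$ and is connected, hence trivial, so $G^{\circ}=\TT$. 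Thus $G$ is a finite extension of the torus $\TT$ (and is finite when $\TT$ is trivial).

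Now suppose $X$ is not rigid. Fix a nontrivial $\GG_a$-action on $X$ and let $\partial\in\LND(A)$ be its associated (nonzero) locally nilpotent derivation; by construction $\partial$ corresponds to a $\GG_a$-action on $X$. The first step is to prove $\dim X=1$. If $\dim X\ge 2$, then the invariant algebra $\Ker(\partial)=A^{\GG_a}$ has transcendence degree $\dim X-1\ge 1$, hence is infinite-dimensional over $\KK$; pick $\KK$-linearly independent $f_0,\dots,f_N\in\Ker(\partial)$. By Proposition~\ref{lndga}(ii) each $f_i\partial$ also corresponds to a $\GG_a$-action on $X$, and since $\Ker(\partial)$ is $\partial$-stable one has $[f_i\partial,f_j\partial]=0$, so these $\GG_a$-actions commute. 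Then $(s_0,\dots,s_N)\mapsto\prod_i\exp(s_if_i\partial)=\exp\bigl((s_0f_0+\dots+s_Nf_N)\partial\bigr)$ is an injective homomorphism $\GG_a^{N+1}\to\Aut(X)$ (injective because the $f_i$ are independent and $\partial\ne 0$) whose composite with the morphic action of $\Aut(X)$ on $X$ is a morphism; this realizes $\GG_a^{N+1}$ as an algebraic subgroup of $\Aut(X)$, so $\dim\Aut(X)\ge N+1$ for all $N$, contradicting that $\Aut(X)$ is algebraic. Hence $X$ is a curve.

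It remains to identify $X$. The nontrivial $\GG_a$-action lifts to the normalization $\widetilde X$, which, being a smooth non-proper curve, is affine, and it further extends to the smooth projective model $\overline X$. As $\overline X$ then carries a nontrivial $\GG_a$-action, it is $\PP^1$; a nontrivial $\GG_a$-action on $\PP^1$ is conjugate to a translation, whose only nonempty finite invariant set is its unique fixed point, so $\widetilde X\cong\AA^1$. The finite $\GG_a$-invariant subset of $\widetilde X$ lying over the non-normal locus of $X$ must then be empty (a translation action on $\AA^1$ has no nonempty finite invariant set), so $X$ is normal and $X\cong\AA^1$. Finally $\Aut(\AA^1)=\{x\mapsto ax+b:\ a\in\KK^{\times},\ b\in\KK\}\cong\GG_a\rtimes\GG_m$, which is precisely the group $G$ displayed in the statement; together with the rigid case this gives the three listed possibilities.

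The step I expect to be the main obstacle is this reduction in the non-rigid case, namely making precise that the commuting family $\{\exp(f\partial):f\in\Ker(\partial)\}$ produces genuine \emph{algebraic} subgroups $\GG_a^{N}\subseteq\Aut(X)$ of unbounded dimension --- this is exactly where Proposition~\ref{lndga}(ii) together with the hypothesis that the $\Aut(X)$-action is a morphism are both needed --- as well as the ensuing curve analysis, which must rule out non-normal models and positive genus. The rigid case and the computation of $\Aut(\AA^1)$ are routine.
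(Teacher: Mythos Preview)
Your proof is correct and rests on the same two key ingredients as the paper's: in the rigid case, the absence of $\GG_a$-subgroups forces $G^{\circ}$ to be a torus; in the non-rigid case with $\dim X\ge 2$, Proposition~\ref{lndga}(ii) produces commutative unipotent subgroups of unbounded dimension, contradicting finite-dimensionality of $\Aut(X)$. The organization differs, however. The paper splits first by dimension: it treats all quasiaffine curves (rigid or not) by an explicit enumeration---rational versus positive genus, then for rational curves the cases $\AA^1$, $\AA^1$ minus points, and singular curves via normalization---and only afterwards handles $\dim X\ge 2$, splitting there into rigid and non-rigid. You instead split first by rigidity, so rigid curves are absorbed into the general rigid argument, and in the non-rigid branch you first deduce $\dim X=1$ and then argue directly from the $\GG_a$-action (lifting to the normalization and the projective model) that $X\cong\AA^1$. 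Your route is a bit more uniform; the paper's curve enumeration is more elementary and avoids the lifting step. One small remark: in your rigid case the appeal to Theorem~\ref{trigid} is unnecessary---once $G$ contains no one-parameter unipotent subgroup, both the unipotent radical and the semisimple part of $G^{\circ}$ vanish, so $G^{\circ}$ is already a torus, which is exactly how the paper argues.
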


\begin{proof} We assume first that $X$ is a rational curve. If $X=\AA^1$ then $\Aut(X)$ is isomorphic to the group $G$. If $X$ is $\AA^1$ with one point removed, then $\Aut(X)$ is an extension of 1-torus. If we remove more than one point from $\AA^1$, the group $\Aut(X)$ becomes finite. For a singular rational curve $X$, the automorphism group $\Aut(X)$ lifts to normalization and preserves the preimage of the singular locus. Thus $\Aut(X)$ is contained in an extension of  1-torus.

It follows from the description of the automorphism group of an elliptic curve and from Hurwitz's Theorem that the automorphism group of an affine curve $X$ of positive genus is finite.

Now let us assume that $\dim X\ge 2$. If $X$ is rigid then the affine algebraic group $\Aut(X)$
contains no one-parameter unipotent subgroup. It means that the unipotent radical and the semisimple part of $\Aut(X)$ are trivial. Hence $\Aut(X)$ is either finite or a finite extension of a torus.

Finally, let $\GG_a\times X\to X$ be a non-trivial action and $\partial\in\LND(\KK[X])$ the corresponding locally nilpotent derivation. By~\cite[Principle~11]{F}, the transcendence degree
of the algebra $\Ker(\partial)$ equals $\dim(X)-1\ge 1$. Let $U$ be a subspace in
$\Ker(\partial)$. Proposition~\ref{lndga},~(ii) implies that the automorphisms $\exp(f\partial)$, $f\in U$, form a commutative unipotent subgroup in $\Aut(X)$ of dimension $\dim(U)$. Since  $\dim(U)$ may be arbitrary, the group $\Aut(X)$ does not admit a structure of an affine algebraic group.
\end{proof}

\begin{remark}
Many examples of affine algebraic varieties whose automorphism group is a finite extension of a torus are provided by trinomial hypersurfaces, see~\cite[Theorem~3]{AG}.
\end{remark}

\begin{remark}
The class of affine algebraic groups which can be realized as the automorphism groups of complete
varieties is much wider. For example, the automorphism group of a complete toric variety is always an affine algebraic group of type A. A description of such groups is given in \cite{De,Cox}. Some other affine algebraic groups appear as the automorphism groups of Mori Dream Spaces; see e.g.
\cite[Theorem~7.2]{AHHL}. It is shown in~\cite[Theorem~1]{Br} that any connected algebraic group over a perfect field is the neutral component of the automorphism group scheme of some normal projective variety.
\end{remark}

\section{Main results} \label{s3}

We come to a characterization of transitivity properties for the automorphism group $\Aut(X)$ in terms of the special automorphism group $\SAut(X)$.

\begin{theorem} \label{tmain}
Let $X$ be an irreducible quasiaffine variety of dimension at least $2$. Assume that $X$ admits a nontrivial $\GG_a$- or $\GG_m$-action and the group $\Aut(X)$ acts on $X$ with an open orbit $\OO$. Then the following conditions are equivalent.
\begin{enumerate}
\item
The group $\Aut(X)$ acts 2-transitively on $\OO$.
\item
The group $\Aut(X)$ acts infinitely transitively on $\OO$.
\item
The group $\SAut(X)$ acts transitively on $\OO$.
\item
The group $\SAut(X)$ acts infinitely transitively on $\OO$.
\end{enumerate}
\end{theorem}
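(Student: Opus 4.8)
The proof splits into a cycle of implications. The implications $(4)\Rightarrow(2)\Rightarrow(1)$ and $(4)\Rightarrow(3)$ are immediate from the definitions (infinite transitivity implies $2$-transitivity, and $\SAut(X)\subseteq\Aut(X)$). The implication $(3)\Rightarrow(4)$ is exactly the extension to quasiaffine varieties of the main theorem of \cite{AFKKZ}, available via \cite[Theorem~2]{APS} or \cite[Theorem~1.11]{FKZ}, applied to the open orbit $\OO$ (after checking $\OO$ is itself quasiaffine of dimension $\ge 2$, which it is as an open subset of $X$). So the entire content is the implication $(1)\Rightarrow(3)$: \emph{if $\Aut(X)$ acts $2$-transitively on $\OO$ and $X$ carries a nontrivial $\GG_a$- or $\GG_m$-action, then $\SAut(X)$ is already transitive on $\OO$.}

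\textbf{Strategy for $(1)\Rightarrow(3)$.} First I would dispose of the rigid case: if $X$ is rigid, then by hypothesis it admits a nontrivial $\GG_m$-action, and by Theorem~\ref{trigid} the group $\Aut(X)$ contains a unique maximal torus $\TT$, which is moreover normal (the Corollary to Theorem~\ref{trigid}). A normal torus acting nontrivially with $\Aut(X)$ acting $2$-transitively forces a contradiction: the fixed-point set of $\TT$ is $\Aut(X)$-invariant, hence empty or all of $\OO$; since the action is nontrivial it is empty, so every point of $\OO$ lies on a nontrivial $\TT$-orbit, but then for a point $x$ with nontrivial stabilizer direction the normality of $\TT$ makes $\mathrm{Stab}_{\TT}(x)$ independent of $x$ up to conjugacy — and $2$-transitivity then forces $\dim\OO\le 1$ or a contradiction with the orbit structure. (This is the standard rigidity obstruction; the clean way is: a normal torus in a $2$-transitive group must act trivially, because the union of its orbit through two generic points cannot be permuted $2$-transitively unless the torus is central and acts by scalars, which is impossible on an affine chart of dimension $\ge 2$.) Hence $X$ is not rigid, so by Corollary~\ref{corcc} there is a nonzero $\partial\in\LND(\KK[X])$, and by Proposition~\ref{lndga}(i) a nontrivial $\GG_a$-action on $X$; thus $\SAut(X)\ne\{e\}$.

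\textbf{The main step.} Now $\SAut(X)$ is a nontrivial normal subgroup of $\Aut(X)$ (it is generated by all $\GG_a$-subgroups, and conjugating a $\GG_a$-subgroup by any automorphism yields another one). Consider an $\SAut(X)$-orbit $\OO'\subseteq\OO$ of a point $x_0$; since $\SAut(X)$ is normal, $\OO'$ has the same dimension as $g\cdot\OO'$ for every $g\in\Aut(X)$, and the orbits $\{g\cdot\OO' : g\in\Aut(X)\}$ partition $\OO$ into pieces all of the same dimension $d:=\dim\OO'$. If $d=\dim\OO$ then $\OO'$ is dense in $\OO$; but $\SAut(X)$-orbits are known to be open (since $\SAut(X)$ is generated by $\GG_a$-actions and one has the standard fact that on a smooth point the tangent vectors of several generic $\GG_a$-actions span the tangent space once the orbit is not a single point — this uses non-rigidity and dimension $\ge 2$ together with $2$-transitivity to rule out a $1$-dimensional foliation), hence $\OO'=\OO$ and we are done. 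So the crux is to exclude $0<d<\dim\OO$: here I would use $2$-transitivity of $\Aut(X)$ to move a point of $\OO'$ and a point off $\overline{\OO'}$, deriving that $\OO\setminus\OO'$ cannot be $\Aut(X)$-stable, contradicting normality of $\SAut(X)$; and to exclude $d=0$ (i.e. $\SAut(X)$ acts trivially on $\OO$) one invokes that $\SAut(X)$ is generated by nontrivial $\GG_a$-actions whose generic orbits are curves in $X$ meeting $\OO$, so $d\ge 1$.

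\textbf{Expected main obstacle.} The delicate point is the dichotomy $d=\dim\OO$ versus $0<d<\dim\OO$: ruling out an intermediate-dimensional $\SAut(X)$-invariant foliation. In the affine setting this is handled in \cite{AFKKZ} by the interplay of transitivity with the structure of $\GG_a$-orbits, but in the quasiaffine case one cannot assume $\KK[X]$ finitely generated and not every LND integrates to a $\GG_a$-action — so Proposition~\ref{lndga} must be used carefully to produce enough genuine $\GG_a$-actions inside $\SAut(X)$ whose orbits through a fixed point $x_0$ span $T_{x_0}\OO$. Concretely: given $\partial\in\LND(\KK[X])$ corresponding to a $\GG_a$-action, Proposition~\ref{lndga}(ii) supplies a whole family $\{f\partial : f\in\Ker\partial\}$ of $\GG_a$-actions, and by the transcendence-degree count $\td\Ker(\partial)=\dim X-1$ these "replicas" move $x_0$ in many directions; conjugating $\partial$ by elements of $\Aut(X)$ (using $2$-transitivity) produces further $\GG_a$-actions with linearly independent velocity vectors at $x_0$, forcing the $\SAut(X)$-orbit to be open, hence (by the partition-into-equidimensional-orbits argument) all of $\OO$. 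Assembling this "flexibility from $2$-transitivity" argument — essentially an adaptation of \cite[\S2]{AFKKZ} and \cite[\S1]{FKZ} that respects the quasiaffine subtleties — is the real work.
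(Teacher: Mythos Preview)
Your handling of the easy implications and the citation for $(3)\Rightarrow(4)$ matches the paper. The substantial divergence is in $(1)\Rightarrow(3)$, where you have badly overcomplicated what is, in the paper, a two-line argument.

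\textbf{The non-rigid case.} Once a nontrivial $\GG_a$-action exists, the paper simply picks two distinct points $x_1,x_2\in\OO$ lying on one $\GG_a$-orbit (such points exist because $\OO$ is open and the action is nontrivial). Given any pair $y_1,y_2\in\OO$, $2$-transitivity furnishes $\varphi\in\Aut(X)$ with $\varphi(x_i)=y_i$; then $y_1,y_2$ lie on one orbit of the conjugated $\GG_a$-action $\varphi\,\GG_a\,\varphi^{-1}\subseteq\SAut(X)$. That is all. Equivalently: $\SAut(X)$ is normal in $\Aut(X)$, it has an orbit in $\OO$ with at least two points, and a nontrivial normal subgroup of a $2$-transitive permutation group is automatically transitive. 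You actually state this argument when ``excluding $0<d<\dim\OO$'', but then fail to notice that it already finishes the proof. Your separate treatment of the case $d=\dim\OO$, the appeal to openness of $\SAut(X)$-orbits, the replicas $f\partial$, and the whole ``main obstacle'' paragraph about spanning $T_{x_0}\OO$ are entirely unnecessary; none of that is used in the paper, and importing the flexibility machinery here would be circular in spirit, since $(3)\Rightarrow(4)$ is where that machinery lives.

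\textbf{The rigid case.} Your treatment is not a proof. The parenthetical about ``a normal torus in a $2$-transitive group must act trivially'' is the right slogan, but ``cannot be permuted $2$-transitively unless the torus is central and acts by scalars'' is hand-waving. The paper argues cleanly in two subcases. If the maximal torus $\TT$ from Theorem~\ref{trigid} acts transitively on $\OO$, then $\OO\cong(\KK^{\times})^n$, and one checks directly that $\Aut(\OO)\cong(\KK^{\times})^n\rtimes\GL_n(\ZZ)$ is not $2$-transitive (the stabilizer of the identity is the countable group $\GL_n(\ZZ)$, which preserves the set of points with rational coordinates). If $\TT$ is not transitive on $\OO$, pick $x_1\ne x_2$ in one $\TT$-orbit and $x_3$ in another; since $\TT$ is normal every automorphism permutes $\TT$-orbits, so nothing fixes $x_1$ and sends $x_2$ to $x_3$, contradicting $2$-transitivity. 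You should replace your sketch with this dichotomy.
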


\begin{proof} Let us prove implications $(1)\Rightarrow (3)\Rightarrow (4) \Rightarrow (2) \Rightarrow (1)$. Implications ${(4)\Rightarrow (2)\Rightarrow (1)}$ are obvious. Implication $(3)\Rightarrow (4)$ is proved in \cite[Theorem~2.2]{AFKKZ} for $X$ affine and in \cite[Theorem~2]{APS}, \cite[Theorem~1.11]{FKZ} for $X$ quasiaffine.

It remains to prove $(1)\Rightarrow (3)$\footnote{This is the only implication where we use the condition on $\GG_a$- or $\GG_m$-action.}. Assume first that there is a nontrivial $\GG_a$-action on~$X$. Let us take two distinct points $x_1$ and $x_2$ in $\OO$ on one $\GG_a$-orbit. By assumption, for every distinct points $y_1,y_2\in\OO$ there exists an automorphism $\varphi\in\Aut(X)$ with
$\varphi(x_i)=y_i$, $i=1,2$. Then the points $y_1$ and $y_2$ lie in the same orbit for the $\GG_a$-action obtained from the initial one by conjugation with $\varphi$. It means that the group $\SAut(X)$ acts transitively on $\OO$.

Now assume that $X$ is rigid and admits a nontrivial $\GG_m$-action. If the maximal torus $\TT$ from Theorem~\ref{trigid} acts transitively on $\OO$, then $\OO$ is isomorphic to the torus $\TT$ and $\Aut(X)$ acts on $\OO$ transitively, but not 2-transitively. Indeed, let us fix an isomorphism between $\OO$ and $(\KK^{\times})^n$. The group $\Aut(\OO)$ is isomorphic to a semidirect product of $\TT$ and the group $\GL_n(\ZZ)$. It shows that the stabilizer in $\Aut(\OO)$ of the unit in $(\KK^{\times})^n$ preserves the set of points with rational coordinates. Consequently, the group $\Aut(\OO)$, and thus the group $\Aut(X)$, cannot act 2-transitively on $\OO$.

Now assume that the action of $\TT$ is not transitive on $\OO$. Let us take points
$x_1,x_2,x_3\in\OO$ such that $x_1\ne x_2$ lie in the same $\TT$-orbit and $x_3$ belongs to other $\TT$-orbit. By Corollary~\ref{corcc}, every automorphism of $X$ permutes $\TT$-orbits on $X$ and thus there is no automorphism preserving $x_1$ and sending $x_2$ to $x_3$, a contradiction with 2-transitivity.

This completes the proof of Theorem~\ref{tmain}.
\end{proof}

\begin{remark}
Implication $(1)\Rightarrow (3)$ for an affine variety $X$ admitting a nontrivial $\GG_a$-action was observed earlier in~\cite{BGT}.
\end{remark}

\begin{corollary} \label{ctrans}
Let $X$ be an irreducible quasiaffine variety of dimension at least $2$. Assume that $X$ admits a nontrivial $\GG_a$- or $\GG_m$-action. Then the following conditions are equivalent.
\begin{enumerate}
\item
The group $\Aut(X)$ acts 2-transitively on $X$.
\item
The group $\Aut(X)$ acts infinitely transitively on $X$.
\item
The group $\SAut(X)$ acts transitively on $X$.
\item
The group $\SAut(X)$ acts infinitely transitively on $X$.
\end{enumerate}
\end{corollary}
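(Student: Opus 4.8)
The plan is to deduce Corollary~\ref{ctrans} directly from Theorem~\ref{tmain} by observing that, under any one of the four listed conditions, the group $\Aut(X)$ already acts transitively on $X$, so that $X$ itself plays the role of the open orbit $\OO$ in Theorem~\ref{tmain}.

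First I would dispose of the elementary implications concerning transitivity. If $\Aut(X)$ acts $2$-transitively on $X$, then, since $\dim X\ge 2$ forces $X$ to have more than one point, the action is in particular transitive; likewise if $\Aut(X)$ acts infinitely transitively on $X$. If instead $\SAut(X)$ acts transitively (resp. infinitely transitively) on $X$, then a fortiori the larger group $\Aut(X)$ acts transitively on $X$. Hence each of conditions $(1)$--$(4)$ implies that the $\Aut(X)$-action on $X$ is transitive, so that $X$ is itself the (unique, open) orbit of $\Aut(X)$ on $X$.

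Next, having fixed one of $(1)$--$(4)$ and thereby $\OO=X$, I would apply Theorem~\ref{tmain}: the hypotheses of that theorem — $X$ irreducible quasiaffine of dimension at least $2$, admitting a nontrivial $\GG_a$- or $\GG_m$-action, and with an open $\Aut(X)$-orbit — are all in force, and its conclusion is exactly the equivalence of the statements ``$\Aut(X)$ acts $2$-transitively on $\OO$'', ``$\Aut(X)$ acts infinitely transitively on $\OO$'', ``$\SAut(X)$ acts transitively on $\OO$'', ``$\SAut(X)$ acts infinitely transitively on $\OO$'', which with $\OO=X$ are conditions $(1)$--$(4)$ of the corollary. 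To keep the logic airtight one runs the cycle $(1)\Rightarrow(3)\Rightarrow(4)\Rightarrow(2)\Rightarrow(1)$, at each step first using the previous paragraph to conclude that $X=\OO$ and then quoting the corresponding arrow of Theorem~\ref{tmain}; the arrows $(4)\Rightarrow(2)$ and $(2)\Rightarrow(1)$ are trivial anyway.

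I do not expect any real obstacle: all the substance sits in Theorem~\ref{tmain}, and the only point requiring a little care is that Theorem~\ref{tmain} carries the open-orbit hypothesis in its statement, so one must first extract transitivity of $\Aut(X)$ on $X$ from whichever of $(1)$--$(4)$ is assumed before that theorem becomes applicable.
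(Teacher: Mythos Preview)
Your argument is correct and is exactly the intended one: the paper states the corollary without proof, since once any of (1)--(4) forces $\Aut(X)$ to act transitively on $X$, the open orbit $\OO$ coincides with $X$ and Theorem~\ref{tmain} applies verbatim. Your care in first extracting transitivity before invoking the theorem is the only point worth noting, and you handled it properly.
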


We recall that the \emph{Makar-Limanov invariant} $\text{ML}(A)$ of an algebra $A$ is the intersection of kernels of all locally nilpotent derivations on $A$. Using Proposition~\ref{lndga}, one can easily show that the Makar-Limanov invariant $\text{ML}(\KK[X])$ of the algebra of regular functions on an irreducible quasiaffine variety $X$ coincides with the algebra of invariants $\KK[X]^{\SAut(X)}$ of the special automorphism group. We denote $\text{ML}(\KK[X])$ just by $\text{ML}(X)$. Note that a~quasiaffine variety $X$ is rigid if and only if $\text{ML}(X)=\KK[X]$.

In \cite{Lie}, a field version of the Makar-Limanov invariant is introduced. Namely,
the \emph{field Makar-Limanov invariant} $\text{FML}(X)$ of an irreducible quasiaffine variety $X$ is the subfield of $\KK(X)$ consisting of all rational $\SAut(X)$-invariants. The condition $\text{FML}(X)=\KK$ implies $\text{ML}(X)=\KK$, but the converse is not true in general. By \cite[Corollary~1.14]{AFKKZ}, we have $\text{FML}(X)=\KK$ if and only if the group $\SAut(X)$ acts on $X$ with an open orbit. In this case the variety $X$ is unirational \cite[Proposition~5.1]{AFKKZ}. Together with Theorem~\ref{tmain} this yields the following result.

\begin{corollary} \label{cunirat}
Let $X$ be an irreducible quasiaffine variety. Assume that $X$ admits a nontrivial $\GG_a$- or $\GG_m$-action and the group $\Aut(X)$ acts on $X$ with an open orbit $\OO$. If the group $\Aut(X)$ is 2-transitive on $\OO$, then $X$ is unirational.
\end{corollary}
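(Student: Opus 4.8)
The plan is to deduce the corollary directly from Theorem~\ref{tmain} and the two results on the field Makar--Limanov invariant recalled just above its statement. If $\dim X\le 1$ there is nothing to prove: an irreducible quasiaffine curve carrying a nontrivial $\GG_a$- or $\GG_m$-action is rational, hence unirational. So I would assume $\dim X\ge 2$, which is precisely what is needed for the hypotheses of Theorem~\ref{tmain} to be in force (the $\GG_a$- or $\GG_m$-assumption and the existence of an open $\Aut(X)$-orbit $\OO$ being granted).

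The first step is to apply the implication $(1)\Rightarrow(3)$ of Theorem~\ref{tmain}: since $\Aut(X)$ acts $2$-transitively on $\OO$, the group $\SAut(X)$ acts transitively on $\OO$. In particular $\SAut(X)$ has an orbit, namely $\OO$, which is open in $X$; equivalently, $\SAut(X)$ acts on $X$ with an open orbit. By \cite[Corollary~1.14]{AFKKZ} the existence of an open $\SAut(X)$-orbit is equivalent to the condition $\text{FML}(X)=\KK$, so we obtain $\text{FML}(X)=\KK$. Finally, \cite[Proposition~5.1]{AFKKZ} states that any irreducible quasiaffine variety with trivial field Makar--Limanov invariant is unirational, which yields the assertion. (Alternatively, once $\Aut(X)$ is known to act $2$-transitively on a dense open subset, unirationality also follows from \cite[Corollary~3]{Po}.)

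I do not expect any genuine obstacle here: the whole content of the corollary is absorbed into Theorem~\ref{tmain}, whose only delicate ingredient — the implication $(1)\Rightarrow(3)$ — has already been established, together with the cited characterization of $\text{FML}(X)=\KK$ and its unirationality consequence. The single point to keep track of is that we really use the openness of $\OO$ in $X$: transitivity of $\SAut(X)$ on $\OO$ must translate into an \emph{open} $\SAut(X)$-orbit, and not merely a locally closed one, so that \cite[Corollary~1.14]{AFKKZ} applies. This is automatic, since $\OO$ is by hypothesis the open $\Aut(X)$-orbit and $\SAut(X)\subseteq\Aut(X)$ is shown to act transitively on it.
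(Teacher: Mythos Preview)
Your proposal is correct and follows essentially the same route as the paper: the paragraph preceding the corollary explains that Theorem~\ref{tmain} yields an open $\SAut(X)$-orbit, whence $\text{FML}(X)=\KK$ by \cite[Corollary~1.14]{AFKKZ}, and then unirationality by \cite[Proposition~5.1]{AFKKZ}. Your explicit treatment of the case $\dim X\le 1$ is a small but welcome addition, since the corollary is stated without a dimension hypothesis while Theorem~\ref{tmain} requires $\dim X\ge 2$.
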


\begin{remark}
Corollary~\ref{cunirat} is a particular case of \cite[Theorem~5]{Po}. The latter theorem claims that if $X$ is an irreducible variety, the group $\Aut(X)$ acts generically 2-transitive on $X$, and $\Aut(X)$ contains a non-trivial connected algebraic subgroup, then $X$ is unirational. Moreover, if $X$ is irreducible, complete, and the group $\Aut(X)$ acts generically 2-transitive on $X$, then $X$ is unirational \cite[Corollary~3]{Po}.
\end{remark}

Let us finish this section with the following conjecture.

\begin{conjecture} \label{conj}
Conditions (1)-(4) of Theorem~\ref{tmain} are equivalent for any irreducible quasiaffine variety $X$ of dimension at least $2$.
\end{conjecture}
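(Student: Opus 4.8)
The plan is to reduce Conjecture~\ref{conj} to a single residual case and then describe how one might hope to settle it. The implications $(3)\Rightarrow(4)\Rightarrow(2)\Rightarrow(1)$ hold for every irreducible quasiaffine variety $X$ of dimension at least $2$ (see the proof of Theorem~\ref{tmain}), so it suffices to establish $(1)\Rightarrow(3)$ without any extra hypothesis. If $X$ admits a nontrivial $\GG_a$-action, this is precisely Theorem~\ref{tmain}. So assume $X$ is rigid. Then $\SAut(X)$ is trivial and hence does not act transitively on the open orbit $\OO$, whose dimension is $\dim X\ge2$; thus condition $(3)$ fails, and what must be shown is that $(1)$ fails as well, i.e. that $\Aut(X)$ does not act $2$-transitively on $\OO$. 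If $X$ carries a nontrivial $\GG_m$-action this is already contained in the proof of Theorem~\ref{tmain}: the unique maximal torus $\TT$ of Theorem~\ref{trigid} is normal in $\Aut(X)$ (being the only maximal torus, it is stable under conjugation), every automorphism permutes its orbits, and one treats separately the case when $\TT$ acts transitively on $\OO$ — then $\OO\cong(\KK^{\times})^n$ and the stabilizer of a point in $\Aut(\OO)\cong\TT\rtimes\GL_n(\ZZ)$ preserves the set of rational points — and the case when it does not, in which two points lying in a common $\TT$-orbit cannot be mapped to a pair lying in distinct $\TT$-orbits. Hence the conjecture is equivalent to the assertion that \emph{a rigid irreducible quasiaffine variety $X$ of dimension at least $2$ that admits no nontrivial torus action cannot have $\Aut(X)$ acting $2$-transitively on an open orbit}.

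To attack this residual case, I would first pin down the algebraic subgroups of $\Aut(X)$. If $H\subseteq\Aut(X)$ is a nontrivial connected affine algebraic subgroup, then by Corollary~\ref{corcc} the rigidity of $X$ forbids any $\GG_a$-subgroup of $H$; but a connected affine algebraic group without $\GG_a$-subgroups is a torus, contradicting the absence of nontrivial torus actions. Thus $\Aut(X)$ contains no nontrivial connected algebraic subgroup. Now fix $x_0\in\OO$. Two-transitivity forces the stabilizer $\Aut(X)_{x_0}$ to act transitively on $\OO\setminus\{x_0\}$, a variety of dimension $\dim X-1\ge1$. The aim is to extract from such a large stabilizer a nontrivial one-parameter algebraic subgroup — necessarily $\GG_a$ or $\GG_m$ — and so reach a contradiction: one equips $\Aut(X)$ with the structure of an ind-group acting algebraically on $X$ (carrying over to the quasiaffine, possibly non-finitely generated, setting the ind-group formalism available for affine varieties), observes that the orbit morphism $\Aut(X)_{x_0}\to\OO\setminus\{x_0\}$ is surjective onto a positive-dimensional variety, chooses an irreducible algebraic curve $C\subseteq\Aut(X)_{x_0}$ whose image is a curve through a point of $\OO\setminus\{x_0\}$, and then runs a completion-and-normalization argument on $C$, in the spirit of the homogeneous-component analysis in the proof of Theorem~\ref{trigid}, to produce a genuine $\GG_a$- or $\GG_m$-action on $X$ fixing $x_0$.

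I expect this last step to be the main obstacle, and it is essentially the content of Problem~\ref{p1}: one must exclude the possibility that $\Aut(X)$ is ``discrete'' in the sense of having no nontrivial connected algebraic subgroup — equivalently, that $X$ has few locally finite automorphisms — while still acting $2$-transitively, or even merely transitively, on a positive-dimensional variety. The two technical ingredients that need to be secured are a workable ind-group structure on $\Aut(X)$ for quasiaffine $X$ (with orbit maps being morphisms of ind-varieties) and a flexibility-type statement to the effect that a positive-dimensional $\Aut(X)$-orbit forces a nontrivial $\GG_a$- or $\GG_m$-action. As a partial sanity check, \cite[Theorem~5]{Po} already yields unirationality of $X$ whenever $\Aut(X)$ contains some nontrivial connected algebraic subgroup and acts generically $2$-transitively, so the residual difficulty is located precisely in the ``no connected subgroup'' regime; any progress on Problem~\ref{p1}, or on the classification asked for in Problem~\ref{p2}, would feed directly into a proof of Conjecture~\ref{conj}.
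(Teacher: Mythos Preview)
The statement you are treating is a \emph{conjecture} in the paper, not a theorem: the paper offers no proof of it, and the surrounding discussion (Section~\ref{s4}, Problem~\ref{p1}) makes clear that the authors regard it as open. So there is no ``paper's own proof'' to compare against.

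Your reduction is correct and matches exactly what the paper already establishes: the implications $(3)\Rightarrow(4)\Rightarrow(2)\Rightarrow(1)$ are unconditional, and $(1)\Rightarrow(3)$ is handled in Theorem~\ref{tmain} whenever $X$ admits a nontrivial $\GG_a$- or $\GG_m$-action. You are right that the residual case is precisely: $X$ rigid with no nontrivial torus action, and one must show $\Aut(X)$ cannot act $2$-transitively on an open orbit. Your observation that in this regime $\Aut(X)$ contains no nontrivial connected algebraic subgroup is also correct.

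However, your proposed attack on the residual case is not a proof but a programme, and you yourself flag the gap. The step where you ``choose an irreducible algebraic curve $C\subseteq\Aut(X)_{x_0}$'' and then ``run a completion-and-normalization argument \ldots\ to produce a genuine $\GG_a$- or $\GG_m$-action'' is the whole difficulty: there is no known mechanism that converts an algebraic family of automorphisms with positive-dimensional image into a one-parameter algebraic subgroup in general, and the homogeneous-component trick from Theorem~\ref{trigid} has no obvious analogue absent a grading. The ind-group structure on $\Aut(X)$ for quasiaffine $X$ with possibly non-finitely-generated $\KK[X]$ is itself not fully developed in the literature. In short, you have correctly isolated the open core of the conjecture --- which is essentially Problem~\ref{p1} --- but not resolved it; the paper does not resolve it either.
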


\begin{remark}
Jelonek~\cite{Je} has proved that every quasiaffine variety $X$ with an infinite automorphism
group is uniruled, i.e., for a generic point in $X$ there exists a rational curve in $X$ through this point.
\end{remark}

\section{Concluding remarks and questions} \label{s4}

In this section we discuss some results and questions related to Conjecture~\ref{conj}.
Let $\phi$ be an automorphism of a quasiaffine variety $X$ and $\phi^*$ be the induced
automorphism of the algebra $\KK[X]$. We say that $\phi$ is \emph{locally finite} if every element
of $\KK[X]$ is contained in a finite dimensional $\phi^*$-invariant subspace.

The following fact is well known to experts, but for the convenience of the reader we give it with a short proof.

\begin{proposition}
Let $X$ be an irreducible quasiaffine variety and $\phi$ an automorphism of~$X$. The following conditions are equivalent.
\begin{enumerate}
\item[(1)]
There exists a regular action $G\times X\to X$ of an affine algebraic group $G$ on $X$ such that $\phi$ is contained in the image of $G$ in the group $\Aut(X)$.
\item[(2)]
The automorphism $\phi$ is locally finite.
\end{enumerate}
\end{proposition}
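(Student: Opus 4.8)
The plan is to prove the two implications separately; $(1)\Rightarrow(2)$ is soft, and essentially all the work is in $(2)\Rightarrow(1)$.

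For $(1)\Rightarrow(2)$, suppose $\phi$ lies in the image of a regular action $a\colon G\times X\to X$ of an affine algebraic group $G$, say $\phi=a(g,\cdot)$ for some $g\in G$. I would invoke the standard fact that $\KK[X]$ is then a rational $G$-module: since $G$ is affine, $\KK[G\times X]=\KK[G]\otimes_{\KK}\KK[X]$, so the comorphism $a^{*}$ sends a given $h\in\KK[X]$ to a finite sum $\sum_{i=1}^{k}p_i\otimes h_i$ with $p_i\in\KK[G]$, $h_i\in\KK[X]$, and then every $G$-translate of $h$ — in particular $(\phi^{*})^{j}(h)$ for all $j$ — lies in the finite-dimensional subspace $\langle h_1,\dots,h_k\rangle$. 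Hence $h$ lies in a finite-dimensional $\phi^{*}$-invariant subspace, so $\phi$ is locally finite.

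For $(2)\Rightarrow(1)$, assume $\phi$ is locally finite. Since $X$ is quasiaffine it admits a locally closed immersion $X\hookrightarrow\AA^{m}$; let $g_1,\dots,g_m\in\KK[X]$ be the pullbacks of the coordinate functions. By local finiteness each $g_j$ lies in a finite-dimensional $\phi^{*}$-invariant subspace, and I would let $V\subseteq\KK[X]$ be the sum of these, so that $V$ is finite-dimensional and $\phi^{*}(V)=V$ (the restriction $\phi^{*}|_{V}$ is injective, hence bijective, so $V$ is $\langle\phi^{*}\rangle$-invariant). Fixing a basis $v_1,\dots,v_n$ of $V$ gives a morphism $\iota\colon X\to\AA^{n}$, $x\mapsto(v_1(x),\dots,v_n(x))$, which is again a locally closed immersion because the original one is a coordinate projection of it. Put $Y:=\overline{\iota(X)}\subseteq\AA^{n}$; then $\iota(X)$ is open in the affine variety $Y$, and $\KK[Y]$ is the subalgebra $B\subseteq\KK[X]$ generated by $V$. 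As $\phi^{*}(V)=V$, the automorphism $\phi^{*}$ of $\KK[X]$ restricts to an automorphism of $B$ and hence induces an automorphism $\bar\phi$ of $Y$ extending $\phi$; in particular $\bar\phi(\iota(X))=\iota(X)$. Next I would set $\sigma:=\phi^{*}|_{V}\in\GL(V)$ and let $G\subseteq\GL(V)$ be the Zariski closure of the cyclic group $\langle\sigma\rangle$ — a commutative affine algebraic group acting on $\AA^{n}$ by linear coordinate changes. The ideal $I(Y)$ is $\sigma$-stable (being an algebra automorphism of $\KK[X]$ with $\phi^{*}(V)=V$, $\phi^{*}$ preserves all polynomial relations among $v_1,\dots,v_n$), and stabilizing $I(Y)$ is a Zariski-closed condition on $\GL(V)$, so $G$ stabilizes $I(Y)$, acts regularly on $Y$, and has $\bar\phi$ in the image of $G\to\Aut(Y)$. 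Finally, $\{\,g\in G:\ g\cdot(Y\setminus\iota(X))=Y\setminus\iota(X)\,\}$ is closed, being the stabilizer of a closed subset, and contains the dense subgroup $\langle\sigma\rangle$, hence equals $G$; so $G$ preserves $\iota(X)\cong X$, and restricting the action gives a regular action $G\times X\to X$ with $\phi$ in the image of $G$ in $\Aut(X)$.

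I expect the main obstacle to be exactly the last two steps of $(2)\Rightarrow(1)$: checking that the Zariski closure $G$ of $\langle\sigma\rangle$ still acts on $Y$ and, crucially, still leaves the open subvariety $X$ invariant. This is also the point where the quasiaffine difficulties are sidestepped rather than confronted: unlike arguments via Jordan decomposition, nothing here is integrated (no locally nilpotent derivation is exponentiated to a $\GG_a$-action) and finite generation of $\KK[X]$ is never used — one only passes to the Zariski closure of a cyclic group inside a finite-dimensional linear group and verifies that it preserves $X\subseteq Y$.
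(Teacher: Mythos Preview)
Your proof is correct and follows essentially the same route as the paper's: for $(2)\Rightarrow(1)$ both arguments produce a finite-dimensional $\phi^{*}$-invariant subspace $V\subseteq\KK[X]$ giving an open embedding $X\hookrightarrow Y=\Spec\,\KK[V]$, and then realize the desired algebraic group as a closed subgroup of $\GL(V)$ preserving both $I(Y)$ and the closed complement $Y\setminus X$. The only difference is cosmetic --- the paper takes $G$ to be the full subgroup of $\GL(V)$ with these two properties, while you take the Zariski closure of the cyclic group $\langle\sigma\rangle$ and verify the same two closed conditions; either choice works.
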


\begin{proof}
For implication $(1)\Rightarrow (2)$, see e.g. \cite[Lemma~1.4]{PV}. Conversely, assume that
$\phi$ is locally finite and let $U$ be a finite-dimensional $\phi^*$-invariant subspace in $\KK[X]$ which generates a subalgebra $A$ in $\KK[X]$ such that the morphism $X\to Z:=\Spec(A)$ is an open embedding. Let $G$ be the subgroup of all automorphisms of $X$ that preserve the subspace~$U$. Since $U$ generates the field $\KK(X)$, the group $G$ is a subgroup of the general linear group~$\GL(U)$. Moreover, every element of $G$ induces an automorphism of $Z$. The subgroup $G'$ of all elements of $\GL(U)$ which induce an automorphism of $Z$ is closed in $\GL(U)$. The subgroup $G$ of $G'$ consists of automorphisms of $Z$ which preserve the (closed) subvariety $Z\setminus X$. This proves that $G$ is an affine algebraic group.
\end{proof}

\begin{remark}
For further characterizations of automorphisms belonging to algebraic subgroups of $\Aut(X)$, see~\cite{Ra}.
\end{remark}

Clearly, every automorphism of finite order is locally finite. The condition that a quasiaffine variety $X$ admits no nontrivial actions of the groups $\GG_a$ and $\GG_m$ means that every locally finite automorphism of $X$ has finite order.

\begin{problem} \label{p1}
Let $X$ be an irreducible quasiaffine variety such that every locally finite automorphism of $X$ has finite order. Can the group $\Aut(X)$ act transitively (2-transitively, infinitely transitively) on $X$?
\end{problem}

Let us give examples of automorphisms which are not locally finite. Let $X$ be a 2-torus with the algebra of regular functions
$\KK[X]=\KK[T_1,T_1^{-1},T_2,T_2^{-1}]$. Then the map
$$
\phi\colon (t_1,t_2) \mapsto (t_1t_2,t_2)
$$
is an automorphism of $X$ and the function $T_1$ is not contained in a finite dimensional $\phi^*$-invariant subspace of $\KK[X]$.

An automorphism of the affine plane $\AA^2$ which is not locally finite may be given as
$$
(x,y)\mapsto (x+y^2, x+y+y^2).
$$

More examples of automorphisms which are not locally finite can be found in~\cite{BD}. The authors describe a family of rational affine surfaces $S$ such that the normal subgroup $\Aut(S)_{\text{alg}}$ of $\Aut(S)$ generated by all algebraic subgroups of $\Aut(S)$ is not generated by any countable family of such subgroups, and the quotient $\Aut(S)/\Aut(S)_{\text{alg}}$ contains a free group over an uncountable set of generators. A description of automorphisms in~\cite{BD} is given in a purely geometric terms. It seems to be an important problem to find more methods for constructing automorphisms of quasiaffine varieties which are not locally finite.

Working with Conjecture~\ref{conj}, one may wish to replace an arbitrary quasiaffine variety by a quasiaffine variety admitting a nontrivial $\GG_a$- or $\GG_m$-action. For example, let $X$ be an irreducible quasiaffine variety such that the group $\Aut(X)$ is 2-transitive on $X$. Is it true that the group $\Aut(X\times\AA^1)$ is 2-transitive on $X\times\AA^1$? This question is related to algebraic families of automorphisms in the sense of~\cite{Ra}.

\smallskip

Let us finish this section with a general problem on transitivity for algebraic varieties. We say that an algebraic variety $X$ is \emph{homogeneous} if the group $\Aut(X)$ acts transitively on~$X$. A wide class of homogeneous varieties form homogeneous spaces of algebraic groups. At~the same time, not every homogeneous variety is homogeneous with respect to an algebraic group; an example of a homogeneous quasiaffine toric surface which is not a homogeneous space of an algebraic group is given in~\cite[Example~2.2]{AKZ}. More generally, it follows from ~\cite[Theorem~2.1]{AKZ} that every smooth quasiaffine toric variety is homogeneous. We plan to describe all homogeneous toric varieties in a forthcoming publication.

\begin{problem} \label{p2}
Describe all homogeneous algebraic varieties.
\end{problem}

Conjecture~\ref{conj} can be considered as a first step towards the solution of this problem.

%%%%%%%%%%%%%%%%%%%%%%%%%%%%%%%%%%%%%%%%%%%%%%%%%%%%%%%%%%%%%%%%%%%

\end{document}